\newcommand{\id}[1]{\mathrm{id}_{#1}}
\newcommand{\End}[2]{\mathrm{End}_{#1}({#2})}
\newcommand{\cC}{\mathcal{C}}
\newcommand{\cD}{\mathcal{D}}
\newcommand{\msf}[1]{\mathsf{#1}}
\newcommand{\nc}{\newcommand}
\nc{\Db}{\mathrm{D}^{\mathrm{b}}}
\theoremstyle{plain}
\newtheorem{theo}[equation]{Theorem}
\theoremstyle{definition}
\theoremstyle{remark}
\title[Symmetric monoidal equivalences of TQFTs in dimension two and {F}robenius algebras]{Symmetric monoidal equivalences of topological quantum field theories in dimension two and {F}robenius algebras}
\author[P.\ S.\ Ocal]{Pablo S.\ Ocal}
\address{UCLA Mathematics Department, Los Angeles, CA 90095-1555, USA}
\email{socal@math.ucla.edu}
\urladdr{https://pabloocal.github.io/}
\date{July 2023}
\subjclass[2020]{57R56, 18M05, 57K16, 18M15, 16L60}
\keywords{Frobenius algebra, topological quantum field theory, symmetric monoidal equivalence.}
\thanks{The author was supported by an AMS-Simons Travel Grant.}
\begin{document}

\begin{abstract}
We show that the canonical equivalences of categories between 2-dimensional (unoriented) topological quantum field theories valued in a symmetric monoidal category and (extended) commutative Frobenius algebras in that symmetric monoidal category are symmetric monoidal equivalences. As an application, we recover that the invariant of 2-dimensional manifolds given by the product of (extended) commutative Frobenius algebras in a symmetric tensor category is the multiplication of the invariants given by each of the algebras.
\end{abstract}

\maketitle

\section{Introduction}

There abound references formalizing the folklore statement that 2-dimensional topological quantum field theories (2-d TQFT) with values in a symmetric monoidal category $\cC$ and commutative Frobenius algebras in $\cC$ are equivalent, see for example \cite{abrams1996tqft, kock2004tqft}. A similar bijection between unoriented 2-dimensional topological quantum field theories with values in a symmetric monoidal category $\cC$ and extended commutative Frobenius algebras in $\cC$ was given in \cite{turaevturner2006utqft, czenky2023unoriented}. Here we fill a gap in the existing literature by showing that these correspondences are in fact symmetric monoidal equivalences. We also provide a cute application by recovering the multiplicativity of these invariants when the TQFTs take values in a symmetric tensor category, and remark how results of analogous generality for 2-dimensional homotopy quantum field theories (2-d HQFT) cannot be obtained.

\section{Preliminaries}

All the categories in this note will be \emph{symmetric monoidal} as in \cite[Definition 8.1.12]{EGNO2015tensor}, and using the coherence theorem for symmetric monoidal categories we regard all associators and unitors as identities. A \emph{symmetric monoidal equivalence} of symmetric monoidal categories is a \emph{symmetric monoidal functor} which is also an equivalence of categories (see \cite[Definitions 1.1.4, 1.1.5, 1.1.6, and Proposition 1.1.9]{windelborn2023hqft}, this is the particular case of a braided monoidal equivalence \cite[Definition 8.1.7]{EGNO2015tensor} between categories having a symmetric braiding). Given categories $\cC$ and $\cD$, then there is a category $\msf{SymMonCat}(\cC,\cD)$ of symmetric monoidal functors and symmetric monoidal natural transformations between them \cite[3.2.51]{kock2004tqft} (compare \cite[Definition 1.1.6]{windelborn2023hqft} with \cite[Definition 2.4.8 and Remark 2.4.9]{EGNO2015tensor}). Given $F$ and $G$ in $\msf{SymMonCat}(\cC,\cD)$ then $F\otimes G:\cC\to \cD$ given by $A\mapsto F(A)\otimes G(A)$ on objects of $\cC$ and $h\mapsto F(h)\otimes G(h)$ on morphisms of $\cC$ is a symmetric monoidal functor. The monoidal unit is the functor sending every object to $\mathbb{1}_{\cD}$ and every morphism to $\id{\mathbb{1}_{\cD}}$. The associator, unitors, and braiding are inherited from $\cD$. Altogether, this gives $\msf{SymMonCat}(\cC,\cD)$ a symmetric monoidal structure. The categories $\msf{2Cob}$ and $\msf{2UCob}$ of 2-dimensional oriented and unoriented cobordisms are symmetric monoidal, with monoidal structure given by disjoint union (see \cite[3.2.44]{kock2004tqft} and \cite[Definition 2.5]{czenky2023unoriented}). A \emph{2-d TQFT} \cite{atiyah1988tqft} with values in $\cC$ is a symmetric monoidal functor from $\msf{2Cob}$ to $\cC$. These form the category $\msf{SymMonCat}(\msf{2Cob},\cC)$. An \emph{unoriented 2-d TQFT} \cite{turaevturner2006utqft} with values in $\cC$ is a symmetric monoidal functor from $\msf{2UCob}$ to $\cC$. These form the category $\msf{SymMonCat}(\msf{2UCob},\cC)$.

We will be using the conventions in \cite[Definitions 7.8.1 and 7.20.3]{EGNO2015tensor} for \emph{algebra}, \emph{coalgebra}, and \emph{Frobenius algebra} in a category $\cC$. Given $A$ and $B$ Frobenius algebras in $\cC$, a \emph{morphism of Frobenius algebras} in $\cC$ is a morphism $f:A\to B$ in $\cC$ making the following diagrams commute (compare with \cite[2.4.4]{kock2004tqft}).
\begin{equation*}
\begin{tikzcd}
\mathbb{1} \arrow[swap]{d}{u_A} \arrow{r}{\id{\mathbb{1}}} & \mathbb{1} \arrow{d}{u_B}\\
A \arrow{r}{f} & B
\end{tikzcd}\quad
\begin{tikzcd}
A\otimes A \arrow[swap]{d}{m_A} \arrow{r}{f\otimes f} & B\otimes B \arrow{d}{m_B}\\
A \arrow{r}{f} & B
\end{tikzcd}\quad
\begin{tikzcd}
\mathbb{1} \arrow{r}{\id{\mathbb{1}}} & \mathbb{1}\\
A \arrow{r}{f} \arrow{u}{\epsilon_A} & B \arrow[swap]{u}{\epsilon_B}
\end{tikzcd}\quad
\begin{tikzcd}
A\otimes A \arrow{r}{f\otimes f} & B\otimes B\\
A \arrow{r}{f} \arrow{u}{\Delta_A} & B \arrow[swap]{u}{\Delta_B}
\end{tikzcd}
\end{equation*}
An \emph{extended Frobenius algebra} in $\cC$ is a tuple $(A,m,u,\Delta,\epsilon,\phi,\theta)$ where $(A,m,u,\Delta,\epsilon)$ is a Frobenius algebra in $\cC$, and $\phi:A\to A$ and $\theta:\mathbb{1}\to A$ are morphisms of Frobenius algebras in $\cC$ making the following diagrams commute (see \cite[Definition 2.10]{czenky2023unoriented}).
\begin{equation*}
\begin{tikzcd}
A \arrow{r}{\phi} \arrow[swap]{dr}{\id{A}} & A \arrow{d}{\phi}\\
 & A
\end{tikzcd}\quad
\begin{tikzcd}
A\otimes A \arrow{d}{m} & \mathbb{1}\otimes A \arrow{r}{\theta\otimes\id{A}} \arrow[swap]{l}{\theta\otimes\id{A}} & A\otimes A \arrow[swap]{d}{m}\\
A \arrow{rr}{\phi} & & A
\end{tikzcd}\quad
\begin{tikzcd}[column sep=20pt]
\mathbb{1}\otimes \mathbb{1} \arrow{d}{\theta\otimes\theta} \arrow{r}{l_{\mathbb{1}}} & \mathbb{1} \arrow{rr}{u} & & A \arrow{r}{\Delta} & A\otimes A \arrow[swap]{d}{\phi\otimes \id{A}} \\
A\otimes A \arrow{rr}{m} & & A & & A\otimes A \arrow[swap]{ll}{m}
\end{tikzcd}
\end{equation*}
Given $A$ and $B$ extended Frobenius algebras in $\cC$, a \emph{morphism of extended Frobenius algebras} in $\cC$ is a morphism of Frobenius algebras $f:A\to B$ in $\cC$ making the following diagrams commute (compare with \cite[Definition 2.5]{turaevturner2006utqft}).
\begin{equation*}
\begin{tikzcd}
\mathbb{1} \arrow[swap]{d}{\theta_A} \arrow{r}{\id{\mathbb{1}}} & \mathbb{1} \arrow{d}{\theta_B}\\
A \arrow{r}{f} & B
\end{tikzcd}\quad
\begin{tikzcd}
A \arrow[swap]{d}{\phi_A} \arrow{r}{f} & B \arrow{d}{\phi_B}\\
A \arrow{r}{f} & B
\end{tikzcd}
\begin{tikzcd}
\end{tikzcd}
\end{equation*}
There is a category $\msf{cFrob}(\cC)$ of commutative Frobenius algebras and morphisms of Frobenius algebras in $\cC$, and a category $\msf{cExtFrob}(\cC)$ of commutative extended Frobenius algebras and morphisms of extended Frobenius algebras in $\cC$. Given $A$ and $B$ in $\msf{cFrob}(\cC)$ then, by tedious diagram completion, the object $A\otimes B$ with multiplication $(m_A \otimes m_B)(\id{A}\otimes c_{A,B}\otimes \id{B})$, unit $(u_A\otimes u_B)(l^{-1}_{\mathbb{1}_{\cC}})$, comultiplication $(\id{A}\otimes c^{-1}_{A,B}\otimes \id{B})(\Delta_A\otimes \Delta_B)$, and counit $(l_{\mathbb{1}_{\cC}})(\epsilon_A\otimes \epsilon_B)$ is a commutative Frobenius algebra in $\cC$. When $A$ and $B$ are also in $\msf{cExtFrob}(\cC)$ then a similar procedure shows that the involution $\phi_A\otimes \phi_B$ and the distinguished element $(\theta_A\otimes \theta_B)(l^{-1}_{\mathbb{1}_{\cC}})$ make the above $A\otimes B$ into an extended commutative Frobenius algebra. Thus, the category $\cC$ induces a symmetric monoidal structure on the categories $\msf{cFrob}(\cC)$ and $\msf{cExtFrob}(\cC)$. We refer to \cite{ocaloswald2023dichotomy} for similar techniques and for Frobenius algebras obtained by replacing the braiding $c_{A,B}$ with more general isomorphisms in $\cC$.

\section{From correspondences to symmetric monoidal equivalences}

\begin{theo}\label{theo:oriented}
The canonical equivalence $\Phi:\msf{SymMonCat}(\msf{2Cob},\cC) \simeq \msf{cFrob}(\cC)$ is a symmetric monoidal equivalence.
\end{theo}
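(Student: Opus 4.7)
The underlying equivalence $\Phi: Z \mapsto Z(S^1)$ being taken as given, the task is to upgrade it to a symmetric monoidal functor by furnishing structure data and checking coherence. On underlying $\cC$-objects, $\Phi(F \otimes G) = (F \otimes G)(S^1) = F(S^1) \otimes G(S^1)$ by the pointwise definition of the tensor in $\msf{SymMonCat}(\msf{2Cob},\cC)$, which is exactly the underlying object of $\Phi(F) \otimes \Phi(G)$. I would therefore take the structure isomorphism $J_{F,G}:\Phi(F) \otimes \Phi(G) \to \Phi(F \otimes G)$ to be the identity in $\cC$, and take $\phi_0: \mathbb{1}_{\msf{cFrob}(\cC)} \to \Phi(\mathbb{1})$ to be $\id{\mathbb{1}_{\cC}}$ (noting that the monoidal unit of $\msf{SymMonCat}(\msf{2Cob},\cC)$ sends $S^1$ to $\mathbb{1}_\cC$ with its canonical trivial Frobenius structure).

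The decisive step is checking that $J_{F,G}$ is a morphism of Frobenius algebras. For multiplications, the tensor-product Frobenius structure on $\Phi(F) \otimes \Phi(G)$ gives
\[
(m_F \otimes m_G) \circ (\id{F(S^1)} \otimes c_{F(S^1), G(S^1)} \otimes \id{G(S^1)})
\]
with $m_F = F(\text{pants})$ modulo the coherence identification $F(S^1 \sqcup S^1) = F(S^1) \otimes F(S^1)$ (and similarly for $G$). The multiplication on $\Phi(F \otimes G)$ is $(F \otimes G)(\text{pants})$ precomposed with the coherence map of $F \otimes G$ at $(S^1, S^1)$. Unfolding the latter — built by construction from the braiding $c$ of $\cC$ permuting the middle two factors followed by the parallel coherence maps of $F$ and $G$ — matches it with the former on the nose. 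The unit, counit, and comultiplication checks are identical in spirit and reproduce exactly the formulae recalled in the preliminaries.

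The remaining symmetric monoidal functor axioms — associativity and unit constraints for $J$, together with compatibility with the braiding — collapse to coherence in $\cC$, because $J$ and $\phi_0$ are identities and both braidings on $\msf{SymMonCat}(\msf{2Cob},\cC)$ and on $\msf{cFrob}(\cC)$ descend pointwise from $\cC$. Naturality of $J$ in $F$ and $G$ is automatic, since $\Phi$ sends a symmetric monoidal natural transformation $\eta$ to $\eta_{S^1}$ and the pointwise tensor of transformations acts pointwise. Combined with the given equivalence of underlying categories, these data promote $\Phi$ to a symmetric monoidal equivalence.

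The main obstacle is not conceptual but notational: one must carefully align the coherence data of the pointwise tensor of two symmetric monoidal functors — whose precise form the paper does not spell out — with the braiding-laden formulae for the tensor of commutative Frobenius algebras recorded in the preliminaries. Once this matching is written down, every required diagram commutes by definition, and an analogous script will handle Theorem's unoriented counterpart.
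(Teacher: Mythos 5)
Your proposal is correct and follows essentially the same route as the paper: both take $\Phi$ to be evaluation at the circle, set $J_{F,G}$ and the unit constraint to be identities, and observe that all coherence reduces to coherence in $\cC$. The only difference is one of detail — you explicitly verify that the Frobenius structure on $(F\otimes G)(\mathbb{S}^1)$ unfolds to the tensor-product Frobenius structure of the preliminaries, a check the paper's proof leaves implicit.
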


\begin{proof}
The assignment $\Phi$ mapping a TQFT to its evaluation at the circle $F \mapsto F(\mathbb{S}^1)$ and mapping a symmetric monoidal natural transformation to its component at the circle $(\eta:F\Rightarrow G) \mapsto (\eta_{\mathbb{S}^1}:F(\mathbb{S}^1)\to G(\mathbb{S}^1))$, is the canonical equivalence by \cite[Theorem 3]{abrams1996tqft} and \cite[Theorems 3.3.2 and 3.6.19]{kock2004tqft}. The unit $\mathbb{1}_{2d\,TQFT}:\msf{2Cob}\to \cC$ is mapped to the unit $\mathbb{1}_{\cC}$, the tensor product of TQFTs $F\otimes G$ is mapped to the tensor product of their respective evaluations $F(\mathbb{S}^1)\otimes G(\mathbb{S}^1)$, whence taking $J_{F,G} = \id{F(\mathbb{S}^1)\otimes G(\mathbb{S}^1)}$ makes $\Phi$ into a symmetric monoidal functor.
\end{proof}

\begin{theo}\label{theo:unoriented}
There is a canonical symmetric monoidal equivalence $\Phi:\msf{SymMonCat}(\msf{2UCob},\cC) \simeq \msf{cExtFrob}(\cC)$.
\end{theo}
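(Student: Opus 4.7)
The plan is to proceed in direct parallel with the proof of Theorem \ref{theo:oriented}. I would define $\Phi$ on objects by $F \mapsto F(\mathbb{S}^1)$, where the commutative Frobenius algebra structure on $F(\mathbb{S}^1)$ is induced by the pair of pants, cup, copair of pants, and cap cobordisms in $\msf{2UCob}$, and the involution $\phi$ and distinguished element $\theta$ come from evaluating $F$ on the Möbius band and the cross-cap (projective plane with a hole) cobordisms, respectively. On morphisms, $\Phi$ sends a symmetric monoidal natural transformation $\eta:F\Rightarrow G$ to its component $\eta_{\mathbb{S}^1}$. That this assignment is an equivalence of categories is the content of \cite[Section 2]{turaevturner2006utqft} and \cite[Theorem 3.9]{czenky2023unoriented}, so the real content here is equipping $\Phi$ with a symmetric monoidal structure.

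Next I would check the monoidal data. The monoidal unit of $\msf{SymMonCat}(\msf{2UCob},\cC)$ is the constant functor at $\mathbb{1}_{\cC}$, whose value at $\mathbb{S}^1$ is the trivial extended commutative Frobenius algebra $\mathbb{1}_{\cC}$. For the tensor product, given $F,G \in \msf{SymMonCat}(\msf{2UCob},\cC)$ we have $(F\otimes G)(\mathbb{S}^1) = F(\mathbb{S}^1)\otimes G(\mathbb{S}^1)$ by the pointwise definition of the tensor product in $\msf{SymMonCat}(\msf{2UCob},\cC)$. Setting $J_{F,G} = \id{F(\mathbb{S}^1)\otimes G(\mathbb{S}^1)}$ will be the candidate coherence isomorphism; the task is to verify that this identifies, on the nose, the extended Frobenius algebra $\Phi(F\otimes G)$ with the tensor product $\Phi(F)\otimes \Phi(G)$ in $\msf{cExtFrob}(\cC)$ as described in the preliminaries.

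The main obstacle is precisely this last compatibility, which has two layers. For the underlying Frobenius structure, evaluating $F\otimes G$ on the pair of pants produces $(m_F\otimes m_G)(\id{}\otimes c_{F(\mathbb{S}^1),G(\mathbb{S}^1)}\otimes \id{})$; the braiding appears because $F\otimes G$ is a symmetric monoidal functor and $\msf{2UCob}$ has disjoint union as monoidal structure, so its monoidal coherence forces the middle swap. This matches exactly the multiplication specified on a tensor product of commutative Frobenius algebras in the preliminaries, and analogous matchings for the unit, comultiplication, and counit follow by the same argument. For the extended structure, $(F\otimes G)$ applied to the Möbius band equals $F(\text{M\"ob})\otimes G(\text{M\"ob}) = \phi_{F(\mathbb{S}^1)}\otimes \phi_{G(\mathbb{S}^1)}$, and applied to the cross-cap equals $F(\text{cross-cap})\otimes G(\text{cross-cap})$, which once the source is identified with $\mathbb{1}_{\cC}$ via $l_{\mathbb{1}_{\cC}}^{-1}$ agrees with the $\theta$ prescribed on the tensor product.

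Finally, naturality of $J_{F,G}$ in both arguments and the hexagon/unit axioms for a symmetric monoidal functor reduce to pointwise identities at $\mathbb{S}^1$ that hold trivially because $J$ is the identity and the symmetric monoidal structures on both sides are inherited from $\cC$. Conclude that $\Phi$ is a symmetric monoidal functor and, being already an equivalence of the underlying categories, a symmetric monoidal equivalence.
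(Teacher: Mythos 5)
Your overall strategy coincides with the paper's: evaluate at $\mathbb{S}^1$, take $J_{F,G}$ to be the identity, and match the induced structure on $\Phi(F\otimes G)$ with the tensor product in $\msf{cExtFrob}(\cC)$ described in the preliminaries; your account of how the braiding enters the multiplication is in fact more explicit than the paper's. However, there are two problems. First, a concrete error in the geometric dictionary: the M\"obius band \emph{is} the punctured projective plane (the cross-cap), a surface with a single boundary circle, hence a cobordism $\emptyset\to\mathbb{S}^1$; it induces $\theta:\mathbb{1}\to F(\mathbb{S}^1)$ and cannot induce the involution $\phi:F(\mathbb{S}^1)\to F(\mathbb{S}^1)$, which requires a cobordism with two boundary circles. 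The involution comes instead from the orientation-reversing cylinder, i.e.\ the mapping cylinder of a reflection of $\mathbb{S}^1$, which is a nontrivial morphism $\mathbb{S}^1\to\mathbb{S}^1$ in $\msf{2UCob}$.

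Second, and more substantively, you defer the statement that $\Phi$ is an equivalence of categories entirely to the references, but the results of Turaev--Turner and Czenky invoked here only establish a bijection between \emph{isomorphism classes} of unoriented TQFTs and commutative extended Frobenius algebras; they say nothing about morphisms. To obtain an equivalence of categories one must check that $\Phi$ is well defined and fully faithful on morphisms, that is, that $\eta_{\mathbb{S}^1}$ is a morphism of \emph{extended} Frobenius algebras for every symmetric monoidal natural transformation $\eta:F\Rightarrow G$. This is precisely the step the paper supplies: naturality of $\eta$ with respect to the two pairs of pants, cup, and cap shows $\eta_{\mathbb{S}^1}$ is a morphism of Frobenius algebras; naturality with respect to the orientation-reversing cylinder gives $\eta_{\mathbb{S}^1}\,\phi_{F(\mathbb{S}^1)}=\phi_{G(\mathbb{S}^1)}\,\eta_{\mathbb{S}^1}$; and naturality with respect to the punctured projective plane, viewed as a cobordism $\emptyset\to\mathbb{S}^1$, gives $\eta_{\mathbb{S}^1}\,\theta_{F(\mathbb{S}^1)}=\theta_{G(\mathbb{S}^1)}$. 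Without this verification your $\Phi$ is not even known to land in $\msf{cExtFrob}(\cC)$ on morphisms, so the claim that it ``is already an equivalence of the underlying categories'' is unsupported as stated.
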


\begin{proof}
There is a bijective correspondence between isomorphism classes of functors $F:\msf{2UCob}\to \cC$ and commutative extended Frobenius algebras in $\cC$ by \cite[Proposition 2.9]{turaevturner2006utqft} and \cite[Proposition 12]{czenky2023unoriented}. Let $\Phi$ be the same assignment as in the oriented case, so an unoriented TQFT $F$ is mapped to a commutative extended Frobenius algebra $F(\mathbb{S}^1)$, and a symmetric monoidal natural transformation $\eta:F\Rightarrow G$ is mapped to $\eta_{\mathbb{S}^1}:F(\mathbb{S}^1)\to G(\mathbb{S}^1)$ a morphism in $\cC$. Given $A$ a commutative extended Frobenius algebra in $\cC$ then up to isomorphism there is a unique symmetric monoidal functor $F:\msf{2UCob}\to \cC$ such that $F(\mathbb{S}^1) = A$, whence $\Phi$ is canonical. Following the reasoning in \cite[Section 3.3]{kock2004tqft}, the naturality of $\eta$ with respect to both pairs of pants, cup, and cap, implies that $\eta_{\mathbb{S}^1}$ is a morphism of Frobenius algebras in $\cC$. This naturality with respect to the orientation reversing cylinder implies that $\eta_{\mathbb{S}^1} \phi_{F(\mathbb{S}^1)} = \phi_{G(\mathbb{S}^1)} \eta_{\mathbb{S}^1}$, and naturality with respect to the punctured projective sphere seen as a morphism between the empty manifold and the circle implies that $\eta_{\mathbb{S}^1} \theta_{F(\mathbb{S}^1)} = \theta_{G(\mathbb{S}^1)}$. Thus $\eta_{\mathbb{S}^1}$ is a morphism of extended Frobenius algebras in $\cC$, and $\Phi$ is an equivalence of categories as in the oriented case. Since the monoidal structure of $\msf{SymMonCat}(\msf{2UCob},\cC)$ is inherited from the monoidal structure of $\msf{SymMonCat}(\msf{2Cob},\cC)$, the same reasoning as in Theorem~\ref{theo:oriented} makes $\Phi$ into a symmetric monoidal functor.
\end{proof}

When $\cC$ is a symmetric \emph{tensor category} \cite[Definition 4.1.1]{EGNO2015tensor}, a 2-d (oriented or unoriented) TQFT $F$ gives a numerical invariant of (oriented or unoriented) surfaces $M$ by regarding them as morphisms from the empty manifold to the empty manifold \cite[Section 2]{atiyah1988tqft}, whence $F(M) \in \End{\cC}{\mathbb{1}} \cong \mathbb{k}$ can be identified with a scalar in $\mathbb{k}$ an algebraically closed field. As a consequence of the (symmetric) monoidal structure of $\msf{SymMonCat}(\msf{2Cob},\cC)$ and $\msf{SymMonCat}(\msf{2UCob},\cC)$, we obtain that given $F$ and $G$ two TQFTs, the invariant of surfaces given by $F\otimes G$ their tensor product as TQFTs is precisely $(F\otimes G)(M) = F(M)\otimes G(M)$, namely $F(M)G(M)$ the (commutative) product in $\mathbb{k}$ of the invariants associated to $F$ and $G$. This reasoning holds in any dimension, whence the invariant given by the tensor product of $n$-dimensional TQFTs is the product of the invariants given by the $n$-dimensional TQFTs. This can be translated via the canonical equivalences of Theorems~\ref{theo:oriented} and \ref{theo:unoriented} into the usual statement saying that the invariant given by the tensor product of Frobenius algebras is the product of the invariants given by each algebra. Our proof differs from the usual one since it does not rely on cutting the surface nor in the existence of a normal form.

Unfortunately, in full generality, other equivalences similar in nature to the ones treated above are not symmetric monoidal equivalences. For example, \cite[Theorem 4.2.7]{windelborn2023hqft} exhibits an equivalence between 2-dimensional homotopy quantum field theories over a topological space $X$ \cite[Definition 3.0.2]{windelborn2023hqft} and twisted Frobenius algebras \cite[Definition 1.2]{staicturaev2010remarks}. These notions can be effortlessly generalized to admit values in $\cC$ and to be certain objects in $G$-graded categories $\bigoplus_{g\in G}{\cC_g}$, respectively. The category $\msf{SymMonCat}(\msf{2Bord}(X),\cC)$ of 2-d HQFTs is always symmetric monoidal (whence the invariant of the product is the product of the invariants, as before). However, already for $\cC = \mathbb{k}-\msf{Vec}_{G}$ the category of twisted Frobenius algebras in $\cC$ does not inherit a monoidal structure because twisted associativity is not preserved.

\section*{Acknowledgments}

We thank the anonymous referee, whose corrections and suggestions helped improve and clarify this manuscript.

%------------------------------------------------------------------------------

\bibliographystyle{alpha}
\bibliography{ref_sym_mon_equ_TQFT_FA}

%------------------------------------------------------------------------------
\end{document}